\documentclass[a4paper,fleqn]{cas-sc}

\usepackage{amsmath,amsfonts,amssymb,expl3,xparse,etoolbox,xspace,xstring,footmisc}
\usepackage{booktabs,makecell,multirow,array,colortbl,dcolumn,stfloats}
\usepackage{latexsym,amscd,amsfonts,enumerate,supertabular}
\usepackage{tabularx}

\usepackage{bigstrut}
\usepackage{graphicx}
\usepackage{epsfig}

\usepackage{float}
\usepackage{hyperref}
\usepackage{placeins}

\usepackage{cite}

\usepackage{amsthm}

\numberwithin{equation}{section}
\newtheorem{lemma}{Lemma}
\numberwithin{lemma}{section}
\newtheorem{theorem}{Theorem}
\numberwithin{theorem}{section}
%\numberwithin{algorithm}{section}
\newtheorem{remark}{Remark}
\numberwithin{remark}{section}
\newtheorem{example}{Example}
\numberwithin{example}{section}

\usepackage{pstricks,pst-node,pst-tree}
\usepackage{tikz}
\usetikzlibrary{fit,shapes}
\usepackage{pifont}

\def\blacknode{\hbox{\small\textbullet}}

\newcommand*\cir[1]{\tikz[baseline=(char.base)]{
            \node[circle,draw=black,inner sep=0.5pt, text width=2mm] (char) { \small $\hskip 0.5pt{#1}$};}}

\tikzset{
treenode/.style = {inner sep=0.2pt, text centered, font=\rmfamily},
arn_b/.style = {treenode, circle, white, draw=black, fill=black, text width=1.06mm},
arn_w/.style = {treenode, circle, black, draw=black, text width=1.9mm},
w/.style = {treenode, circle, black, draw=black, text width=1.3mm},
textnode/.style = {text width=2mm},
}

%-------------------------------------------------------------------------------------
%Define shortcut to type in COLORs:
%RED, BLUE, PURPLE

\definecolor{darkmagenta}{rgb}{0.55, 0.0, 0.55}

%-------------------------------------------------------------------------------------
\newcommand{\q}{\quad}
\newcommand{\ee}{{\rm e}\hspace{1pt}}

%\spnewtheorem{assumption}{Assumption}{\bfseries}{\rmfamily}

%%%%%%%%%%%%%%%%%%%%%%
\begin{document}

% Short title
\shorttitle{V.T. Luan and T. Alhsmy: Sixth-order exponential Runge--Kutta methods}

%% Short author
%\shortauthors{V.T. Luan et~al.}

% Main title of the paper
\title [mode = title]{Derivation of sixth-order exponential Runge--Kutta methods for stiff systems}                      
% Title footnote mark
% eg: \tnotemark[1]

%\tnotemark[1]

% Title footnote 1.
% eg: \tnotetext[1]{Title footnote text}
% \tnotetext[<tnote number>]{<tnote text>} 
\tnotetext[1]{V.T. Luan was partially supported by NSF awards DMS--2012022 and DMS--2309821.}

\author[]{Vu Thai Luan}
% Corresponding author indication
\cormark[1]

% Email id of the first author
\ead{luan@math.msstate.edu}

% Second author
\author[]{Trky Alhsmy}

% Address/affiliation
\affiliation[]{organization={Department of Mathematics and Statistics, Mississippi State University},
    addressline={410 Allen Hall}, 
    city={Mississippi State},
state={MS},
    postcode={39762}, 
    country={USA}}

% Corresponding author text
\cortext[cor1]{Corresponding author}

% Here goes the abstract
\begin{abstract}
This work constructs the first-ever sixth-order exponential Runge--Kutta (ExpRK) methods for the time integration of stiff parabolic PDEs. First,  we leverage the exponential B-series theory to restate the stiff order conditions for ExpRK methods of arbitrary order based on an essential set of trees only. Then, we explicitly provide the 36 order conditions required for sixth-order methods and present convergence results. In addition, we are able to solve the 36 stiff order conditions in both their weak and strong forms, resulting in two families of sixth-order parallel stages ExpRK schemes. Interestingly, while these new schemes require a high number of stages, they can be implemented efficiently similar to the cost of a 6-stage method. Numerical experiments are given to confirm the accuracy and efficiency of the new schemes.
\end{abstract}

\begin{keywords}
\small 
Exponential RK methods
\sep
Stiff order conditions
\sep Sixth-order methods

\end{keywords}

\maketitle
%%%%%%%%%%%%%%%%%%%%%%%%%%%%%%%%
\section{Introduction}
\label{sec:introduction}
In this paper, we are concerned with time integration of stiff systems of initial value problems of the form
\begin{equation} \label{eq1.1}
u'(t)=A u(t) + g(t,u(t)) = F(t,u(t)),\q u(t_0)=u_0.
\end{equation}
These problems typically arise from spatial discretization of parabolic partial differential equations (PDEs), such as diffusion-reaction problems.
The stiffness here lies in the linear part, where $\|A\|$ exhibits a large norm or even represents an unbounded operator. We assume that the nonlinearity $g(t,u)$ satisfies a local Lipschitz condition with a moderate Lipschitz constant.

Explicit exponential Runge--Kutta (ExpRK)  methods have shown to be highly competitive among the viable time integration methods for integrating stiff systems  \eqref{eq1.1}, see, e.g.,
\cite{HO05,Dujardin2009,Dimarco2011,LO12b,LO14b,Luan2014,Li2014,LCR2020,Luan2021}. 
%\cite{HO05,HO05b,Dujardin2009,Dimarco2011,LO12b,LO14b,Luan2014,Li2014,Bhatt2017,LCR2020,Huang2019,Calandrini2020,Luan2021}.
They were constructed by approximating the true solution of  \eqref{eq1.1} represented via  the Duhamel's formula, leading to the following class of $s$-stage schemes (see \cite{LO12b,LO14b}): 
\begin{subequations} \label{eq:ExpRK}
\begin{align}
 U_{ni}&= u_n + c_i h \varphi _{1} ( c_i h A)F(t_n, u_n) +
 h \sum_{j=2}^{i-1}a_{ij}(h A) D_{nj}, \  2\leq i\leq s,  \label{eq2.1a} \\
u_{n+1}& = u_n + h \varphi _{1} ( h A)F(t_n, u_n) + h \sum_{i=2}^{s}b_{i}(h A) D_{ni}  \label{eq2.1b},
\end{align}
\end{subequations}
where 
 $
 D_{ni}= g (t_n+c_i h, U_{ni})- g(t_n, u_n ),  
$ 
 $U_{ni} \approx u(t_n+c_i h)$, $h = t_{n+1}-t_n >0$ is the time step size, and $c_i $ are nodes. The coefficients $a_{ij}(z)$ and $b_i (z)$ are usually linear combinations of the following functions  and their scaled versions
\begin{equation} \label{eq1.4}
%\varphi_{0}(z)=\ee^z, \q
\varphi_{k}(z)=\int_{0}^{1} \ee^{(1-\theta )z} \frac{\theta ^{k-1}}{(k-1)!}\,\text{d}\theta , \quad k\geq 1.
\end{equation}
In \cite{HO05},  stiff ExpRK methods with orders up to 4 were derived. Building upon this work, a new stiff order condition theory was developed in \cite{LO13}, enabling the derivation of a fifth-order ExpRK method in \cite{LO14b}. None of the previously constructed ExpRK methods, however, satisfies all the required stiff order conditions  in the strong form. In particular, many of the order conditions have been relaxed  in order to reduce complexity and minimize the number of stages when solving them.
%%%%%%%
Very recently, new stiff ExpRK methods of orders 4 and 5 were constructed  in \cite{Luan2021} by relaxing only one order condition as well as allow for parallel implementation of multiple internal stages, resulting in improved accuracy and efficiency. 
%compared to the previous methods of the same order.
Note that ExpRK methods might experience order reduction when applied to PDEs with inhomogeneous time-dependent (i.e., non-vanishing) boundary conditions. However, this issue can be avoided by employing a technique recently proposed and used in \cite{Cano2017,Cano2018,Cano2022a,Cano2022b} for both linear and nonlinear problems.

Due to the substantial growth in the number of stiff order conditions for ExpRK methods, constructing methods of order higher than 5 is nontrivial as this further increases the complexity in solving these order conditions (which involve matrix functions). For example, it was shown in \cite[Table 5.2]{LO13} that the number of stiff order conditions required for ExpRK methods of order 6 is 36, compared to 9 and 16 conditions for fourth- and fifth-order methods, respectively. 

In this work, we leverage the exponential B-series theory  \cite{LO13} to explicitly derive the set of 36 stiff order conditions required for ExpRK methods of order 6. Notably, we not only achieve the solution to these 36 conditions by weakening just one condition, but also successfully satisfy all the stiff order conditions in their strong forms, i.e., without weakening any conditions. 
As a result, we derive the first-ever sixth-order constructed stiff ExpRK methods. 
Additionally, while these new schemes require a high number of stages, they can be implemented efficiently similar to a 6-stage ExpRK method. This is possible because they are designed to have multiple independent internal stages, allowing for simultaneous or parallel implementation.

The organization of the paper is as follows: In Section~\ref{sec2:oc}, we present the stiff order conditions for ExpRK methods of any order based on an essential set of trees only. Additionally, we explicitly provide the 36 order conditions required for stiff ExpRK methods of order 6. Section~\ref{sec3:convergence} presents convergence results for ExpRK methods of order 6. With this, in Section~\ref{sec4:derivation} we derive the first two families of stiff ExpRK methods of order 6, which can be implemented as the cost of a 6-stage method. Finally, a numerical example is presented in Section~\ref{sec:experiments} to illustrate the theoretical results. The primary contributions of this work are  the new stiff order conditions presented in Table~\ref{Table1} and the construction of the two families of sixth-order stiff ExpRK methods, $\mathtt{ExpRK6s15}$ and  $\mathtt{ExpRK6s16}$.

%% SECTION 2: ------------------------------------------------------------------
\section{Stiff order conditions for ExpRK methods of order 6}
\label{sec2:oc}
We begin  by denoting and introducing the following sets of trees:
%%%%%%%%%%%%%%%%%%%%%%%%%%%
\begin{equation*}\label{eq:Tu}
T_u= \{ 
\begin{tikzpicture}
\node [w]{};
\end{tikzpicture}
\}
\cup
\{\tau=\cir{k}\colon k=2,3,\ldots  \},
\  \text{in which  
\begin{tikzpicture}
\node [w]{};
\end{tikzpicture}
represents  $u'(t)$, and  \cir{k}
represents $u^{(k)}(t)$ ($k \ge 2$).
}
\end{equation*}
%%%%%%%%%%%%%%%%%%%%%%%%%%%
$T_g$ represents for all the terms involving the time derivatives of $g(u(t))$ and its elementary differentials, which can be defined recursively as the smallest set of trees, satisfying the property
$
\text{if $\tau_1,\ldots, \tau_m \in T_u \cup T_g$, then $\tau=[\tau_1,\ldots, \tau_m ] \in T_g$.}
$
Here, $[, \cdot ,]$ is the commonly used notation for an operation that combines a finite number of trees $\tau_1,\ldots, \tau_m$ by connecting them to a new shared black root node \blacknode, thereby producing a new tree denoted as $\tau=[\tau_1,\ldots, \tau_m]$.
With this, one can write
\begin{equation}\label{eq:Tg}
T_g= \{ 
\tau=[\tau_1,\ldots, \tau_m ]\colon \tau_i \in T_u \cup T_g, \ i=1,\ldots,m
\}.
\end{equation}
Next, we consider two disjoint subsets of $T_g$, denoted by $T_1$ and $T_2$, which are given as:
%%%%%%%%%%%%%%%%%%%%%%%%%%%
%%%%%%%%%%%%
\begin{equation}\label{eq:T1}
%\begin{aligned}
T_1 =
 \{ 
\tau= [\tau_1,\ldots, \tau_m ]\colon 
\tau_i = 
\begin{tikzpicture}
\node [w]{};
\end{tikzpicture}, \ i=1,\ldots,m
\}
=
 \{ 
\tau=
[
\underbrace{
\begin{tikzpicture}
\node [w]{};
\end{tikzpicture}
,\ldots, 
\begin{tikzpicture}
\node [w]{};
\end{tikzpicture} 
}_{m \ \text{times}}
]: m \ge 1 
\}
 =\Big\{
\hspace*{0.1cm}
\raisebox{-2mm}{
\begin{tikzpicture}[-, grow=up, level 1/.style={sibling distance=4mm,level distance=3mm}]
\node [arn_b] {} child{ node [w]{}};
\end{tikzpicture}}
\hspace*{0.1cm},\hspace*{0.1cm}
\raisebox{-2mm}{
\begin{tikzpicture}[-, grow=up, level 1/.style={sibling distance=4mm,level distance=3mm}]
\node [arn_b] {} child{ node [w] {}} child{  node [w] {}};
\end{tikzpicture}}
\hspace*{0.1cm},\hspace*{0.1cm}
\raisebox{-2mm}{
\begin{tikzpicture}[-, grow=up, level 1/.style={sibling distance=4mm,level distance=3mm}]
\node [arn_b] {} child{ node [w] {}} child{  node [w] {}} child{  node [w] {}};
\end{tikzpicture}}
\hspace*{0.1cm},\hspace*{0.1cm}
\raisebox{-2mm}{
\begin{tikzpicture}[-, grow=up, level 1/.style={sibling distance=4mm,level distance=3mm}]
\node [arn_b] {} child{ node [w] {}} child{  node [w] {}} child{ node [w] {}} child{ node [w] {}};
\end{tikzpicture}},
\ldots
\Big\},
\end{equation}
that is used to represent for $g^{(m)}(u(t))\big(\underbrace{u'(t),\ldots,u'(t)}_{m \ \text{times}}\big)$, $ m \ge 1$, and
\begin{equation}\label{eq:T2}
T_2=
\{ \tau=[\tau_1,\ldots, \tau_m ]\colon \tau_i \in  
\{ 
\begin{tikzpicture}
\node [w]{};
\end{tikzpicture}
\}
\cup 
 T_1 \cup T_2, \ i=1,\ldots,m \}
= \Big\{
\begin{tikzpicture}[-, grow=up, level 1/.style={sibling distance=4mm,level distance=3mm}]
\node [arn_b] {} child{ node [arn_b]{} child{ node [w] {}}};
\end{tikzpicture}
\hspace*{0.2cm},\hspace*{0.1cm}
%%%%%%%%%%%%%%%%
\begin{tikzpicture}[-, grow=up, level 1/.style= {sibling distance=4mm,level distance=3mm} ]
\node [arn_b] {} child{ node [arn_b]{} child{ node [w] {}} child{ node [w] {}}};
\end{tikzpicture}
\hspace*{0.1cm},\hspace*{0.2cm}
%%%%%%%%%%%%%%%%
\begin{tikzpicture}[-, grow=up, level 1/.style={sibling distance=4mm,level distance=3mm}]
\node [arn_b] {}  child{  node [arn_b]{} child{ node [w] {}}} child{ node [w] {}};
\end{tikzpicture}
\hspace*{0.2cm},\hspace*{0.1cm}
%%%%%%%%%%%%%%%%
\begin{tikzpicture}[-, grow=up, level 1/.style={sibling distance=4mm,level distance=3mm}]
\node [arn_b] {} child{ node [arn_b]{} child{node [arn_b]{} child{node [w] {}}}};
\end{tikzpicture}
\hspace*{0.1cm},
\begin{tikzpicture}[-, grow=up, level 1/.style={sibling distance=4mm,level distance=3mm}]
\node [arn_b] {}  child{ node [arn_b]{}  child{ node [w] {}}  child{  node [w] {}} child{  node [w] {}} };
\end{tikzpicture}
\hspace*{0.1cm},\hspace*{0.2cm}
%%%%%%%%%%%%%%%%%%%
\ldots
\Big\}.
\end{equation}
For a tree  $\tau \in T_u \cup T_g$, we also use the standard notions such as the order of $\tau$, denoted by $\vert \tau \vert$, and its symmetry coefficient (see, e.g.,  \cite{Hairer1987}), denoted by $\sigma(\tau)$, which are defined recursively by
%%%%%%%%%%%%%%%
%%%%%%%%%%%%%%%
\begin{equation}\label{eq3.7}
\vert \tau \vert =
\begin{cases}
1, & 
\tau= 
\begin{tikzpicture}
\node [w]{};
\end{tikzpicture}
 \in T_u,  
\\
k, & 
\tau =\cir{k}  \in T_u, 
\\
1+\sum_{i=1}^{m} \vert \tau_i \vert, & \tau=[\tau_1,\ldots, \tau_m ] \in T_g,
\end{cases}
\end{equation}
%$\sigma(\tau)$ denotes the symmetry coefficient of tree $\tau\in T$, which is defined recursively by
\begin{equation}\label{eq3.8}
\sigma(\tau) =
\begin{cases}
\vert \tau \vert !, &\tau \in T_u,  \\
\prod_{i=1}^{k} n_{i}! \,\sigma(\tau_i)^{n_i},&  \tau=[\tau^{n_1}_1,\ldots, \tau^{n_k}_k ] \in  T_g,
\end{cases}
\end{equation}
where the notation 
$[\tau^{n_1}_1,\ldots, \tau^{n_k}_k ]$ stands for a tree $\tau = [\tau_1,\ldots, \tau_m]$ which has $k$ distinct branches among $\tau_1,\ldots, \tau_m $, say $\tau_1,\ldots, \tau_k$, corresponding to the number of occurrences $n_1,\ldots, n_k$, respectively, with $n_1+\ldots +n_k = m$.
%%%%%%%%%%%%%%%

Using these set of trees and notations, below we present a simplified version of the main result of \cite[Theorem 5.1]{LO13} which is sufficient to derive stiff order conditions for explicit ExpRK methods of arbitrary order.

First, we note that since ExpRK methods are invariant under the transformation of \eqref{eq1.1} to its corresponding autonomous version, i.e., replacing $g(t,u)$ by $g(u)$ (see \cite{LO14b}), the order conditions hold the same for both formats. Therefore, for the sake of  simplicity in presenting the derivation of order conditions, one can consider \eqref{eq1.1} with $g(t,u)  \equiv g(u)$.
%%%%%%%%%%%%%%%%%%%%%%%%%%%%%%%%%
\begin{lemma}\label{Lemma2.1}
Assuming that $A$ is the infinitesimal generator of an analytical semigroup $\ee^{tA}$ on a Banach space $X$ and the nonlinearity  $g\colon X \to X$ is sufficiently often Fr\'echet differentiable in a strip along the exact solution, and that  $u\colon [t_0, t_{\text{end}}]\to X$ is sufficiently smooth with derivatives in $X$ (all occurring derivatives are assumed to be uniformly bounded). Then, the required order conditions for stiff ExpRK methods \eqref{eq:ExpRK} of order $p$ applied to \eqref{eq1.1} are 
%%%%%%%%%%%%%%%%%%%%%%%
\begin{subequations}\label{eq:ocExpRK}
\begin{align}
\sum_{i=2}^{s}b_i(hA) \frac{c_i^{|\tau|-1}}{(|\tau|-1)!}&= \varphi_{|\tau|}(hA) \q \text{for all} \ \tau \in T_1 \ \text{with} \ |\tau|\leq p,  \label{eq2.4a}\\
\sum_{i=2}^{s}b_i (hA) g^{(m)}(u)\big( \mathcal{S}_i(\tau_1)(u),\ldots,\mathcal{S}_i(\tau_m)(u) \big)&=0 \q \text{for all} \ \tau=[\tau_1,\ldots, \tau_m ] \in T_2 \ \text{with} \ |\tau|\leq p,\label{eq2.4b}
\end{align}
\end{subequations}
where $ \mathcal{S}_i(\tau_k)(u)$ are the elementary differentials given recursively as 
\begin{equation}\label{eq:Stau}
\mathcal{S}_i(\tau_k)(u)=
\begin{cases}
c_i u'(t), & \text{if} \ 
\tau_k = 
\begin{tikzpicture}
\node [w]{};
\end{tikzpicture}
 \in T_u  
 \\[2mm]
 \psi_{\ell +1,i} (hA)\,g^{(\ell)}(u)\big(\underbrace{u'(t),\ldots, u'(t)}_{\ell \ \text{times}} \big), &   \text{if} \ 
\tau_k= [
\underbrace{
\begin{tikzpicture}
\node [w]{};
\end{tikzpicture}
,\ldots, 
\begin{tikzpicture}
\node [w]{};
\end{tikzpicture} 
}_{\ell \ \text{times}}
]
 \in T_1 \\[2mm]
\tfrac{\sigma(\tau_{k1})\ldots\sigma(\tau_{k\ell})}{\sigma(\tau_k)}\ \sum_{j=2}^{i-1}a_{ij}(hA) g^{(\ell)}(u)\big( \mathcal{S}_j(\tau_{k1})(u),\ldots,\mathcal{S}_j(\tau_{k\ell})(u) \big), &  \text{if} \ \tau_k= [\tau_{k1},\ldots, \tau_{k\ell} ] \in T_2
\end{cases}
\end{equation}
with
\begin{equation}\label{eq2.9}
\psi_{q,i} (h A)= \sum_{k=2}^{i-1}a_{ik} (hA) \frac{c^{q-1}_{k}}{(q-1)!}- c^{q}_{i}\varphi_{q} (c_i h A).
\end{equation}
\end{lemma}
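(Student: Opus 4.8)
The plan is to derive Lemma~\ref{Lemma2.1} as a specialization of the general exponential B-series framework of \cite[Theorem 5.1]{LO13}, restricting the full set of order-condition trees to the essential subsets $T_1$ and $T_2$. The starting point is the observation that the exact solution $u(t_n+c_ih)$ and the numerical internal stages $U_{ni}$ each admit an exponential B-series expansion indexed by trees in $T_u\cup T_g$. The order-$p$ accuracy of \eqref{eq:ExpRK} amounts to matching these two B-series coefficient-by-coefficient for every tree $\tau$ with $|\tau|\le p$, which is precisely the content of \cite[Theorem 5.1]{LO13}. My task is therefore not to reprove that theorem, but to \emph{reduce} its full list of conditions to the two families \eqref{eq2.4a}--\eqref{eq2.4b}.

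First I would set up the recursion for the elementary differentials $\mathcal{S}_i(\tau)(u)$ attached to the internal stages. Inserting the stage definition \eqref{eq2.1a} into the expansion of $g(U_{ni})$ via Taylor's formula and collecting, by the Fa\`a-di-Bruno/Leibniz structure, the term indexed by a tree $\tau_k=[\tau_{k1},\dots,\tau_{k\ell}]$, one reads off the three cases of \eqref{eq:Stau}: the leaf $\begin{tikzpicture}\node [w]{};\end{tikzpicture}$ contributes the linear term $c_iu'(t)$ from the $c_ih\varphi_1(c_ihA)F$ piece of the stage; a tree in $T_1$ contributes the $\psi_{\ell+1,i}$ term, where $\psi_{q,i}$ defined in \eqref{eq2.9} is exactly the defect between the quadrature $\sum_k a_{ik}c_k^{q-1}/(q-1)!$ applied to the linear flow and the exact $c_i^q\varphi_q(c_ihA)$; and a tree in $T_2$ contributes the nested sum over the $a_{ij}(hA)$ with the symmetry prefactor $\sigma(\tau_{k1})\cdots\sigma(\tau_{k\ell})/\sigma(\tau_k)$, which accounts for the multiplicity of identical subtrees so that the B-series density/symmetry bookkeeping of \cite{Hairer1987,LO13} is respected.

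Next I would impose the matching at the level of $u_{n+1}$ in \eqref{eq2.1b}. Comparing the coefficient of each tree in the B-series of the numerical solution against that of the exact solution produces one scalar (operator) equation per tree. The key structural point is a dichotomy: trees whose subtrees are all leaves (i.e.\ $\tau\in T_1$) carry the linear information of the flow and generate the $\varphi$-function conditions \eqref{eq2.4a}, in which the right-hand side $\varphi_{|\tau|}(hA)$ is the exact B-series weight of $u^{(|\tau|)}$; whereas trees with at least one nontrivial $T_2$-branch generate homogeneous conditions \eqref{eq2.4b}, since the corresponding exact-solution weight vanishes once the $T_1$-conditions already hold. I would argue that these two families are \emph{sufficient}: every remaining tree in $T_g$ either lies in $T_1\cup T_2$ or factors through differentials already controlled, so that the $T_u$ trees (contributing to the leading $h\varphi_1(hA)F$ term, which is exact by construction) and composite trees impose no new independent constraints beyond \eqref{eq2.4a}--\eqref{eq2.4b}.

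The main obstacle I anticipate is proving precisely this \emph{reduction to the essential set}: showing that matching the B-series on $T_1\cup T_2$ alone forces the match on \emph{all} trees of order $\le p$. This requires a careful induction on $|\tau|$ using the recursive definition \eqref{eq:Stau}, together with the fact that the ExpRK ansatz \eqref{eq:ExpRK} reproduces the linear part $Au$ exactly (so all ``$T_u$-only'' contributions and any differential built solely from the exactly-integrated linear flow are automatically correct). Handling the symmetry coefficients $\sigma(\tau)$ correctly under this induction — ensuring the combinatorial factor $\sigma(\tau_{k1})\cdots\sigma(\tau_{k\ell})/\sigma(\tau_k)$ in \eqref{eq:Stau} exactly cancels the multinomial counting produced by differentiating $g$ at repeated arguments — is the delicate bookkeeping step, and I would verify it by comparing against the low-order cases tabulated in \cite{LO13} before asserting the general pattern.
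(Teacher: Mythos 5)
Your proposal is correct and takes essentially the same route as the paper: both arguments specialize the exponential B-series result of \cite[Theorem 5.1]{LO13} and reduce the full tree set to the essential subsets $T_1\cup T_2$, with $T_1$ trees producing the $\varphi$-function conditions and $T_2$ trees the homogeneous conditions built from the stage defects $\psi_{q,i}$. The paper short-circuits what you call the ``main obstacle'' (sufficiency of the essential set) by directly citing \cite[Sect. 5.2]{LO13} for its minimality and \cite[eqs. (5.1)]{LO13} for the local-error expansion rather than re-proving it by induction, and it adds one step your plan omits: the analytic-semigroup assumption makes $b_i(hA)$, $a_{ij}(hA)$ and $\psi_{q,i}(hA)$ uniformly bounded, which is what justifies truncating the expansion at order $p$ with constants independent of $\|A\|$ (the genuinely ``stiff'' part of the claim).
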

%%%%%%%%%%%%%%%%%%%%%%%%%%%%%
\begin{proof}
Considering one-step integration \eqref{eq:ExpRK} from $t_n$ to $t_{n+1} = t_n + h$, we denote 
$\tilde{e}_{n+1}=\hat{u}_{n+1}- u(t_{n+1})$ as the local error of ExpRK methods, where $\hat{u}_{n+1}$ represents the numerical solution obtained using the initial value $\tilde{u}_n = u(t_n)$. 
Using the results of \cite[Section 5.2]{LO13}, it is clear that the set of trees 
$ \{ 
\begin{tikzpicture}
\node [w]{};
\end{tikzpicture}
\}
\cup 
 T_1 \cup T_2
 $ 
 introduced earlier constitutes a minimal set of trees required for deriving the stiff order conditions \eqref{eq:ocExpRK}.
Specifically, by using \cite[eqs. (5.1)]{LO13}, one can express the local error $\tilde{e}_{n+1}$ as
\begin{equation}\label{eq:local_err}
\begin{aligned}
\tilde{e}_{n+1} 
& =\sum_{ \tau=[\tau_1,\ldots, \tau_m ] \in T_1} h^{|\tau|}
\Big(\sum_{i=2}^{s}b_i(hA) \tfrac{c_i^{|\tau|-1}}{(|\tau|-1)!} - \varphi_{|\tau|}(hA) \Big)
 %\tfrac{(\vert \tau \vert -1)!}{\sigma(\tau)}  
g^{(m)}(\tilde{u}_n))  
\big(\underbrace{u'(t_n),\cdots,u'(t_n)}_{m \ \text{times} }\big)
\\
%& + \sum_{\tau=[\tau_1,\ldots, \tau_m ] \in T_g | (T_1 \cup T_2) } h^{|\tau|}
%\Big(\sum_{i=2}^{s}b_i(hA) \tfrac{c_i^{|\tau|-1}}{(|\tau|-1)!} - \varphi_{|\tau|}(hA) \Big)
% \tfrac{(\vert \tau \vert -1)!}{\sigma(\tau)}  
%g^{(m)}(\tilde{u}_n)) \big( u^{(|\tau_1|)}(t),\ldots, u^{(|\tau_m|)}(t) \big)    \\
%%%%
& +\sum_{ \tau=[\tau_1,\ldots, \tau_m ] \in T_2 } h^{|\tau|} \tfrac{\sigma(\tau_1)\ldots\sigma(\tau_m)}{\sigma(\tau)}\sum_{i=2}^{s}b_i(h A) g^{(m)}(\tilde{u}_n)\big( \mathcal{S}_i(\tau_1)(\tilde{u}_n),\ldots,\mathcal{S}_i(\tau_m)(\tilde{u}_n) \big)\\
& + 
\underbrace{
\sum_{\tau \in T_g | \ (T_1 \cup T_2) } h^{|\tau|} 
\text{{\footnotesize \big(same weighted coefficients as trees in $T_1$ or $T_2$\big)  $\times$ \big(corresponding elementary differentials\big)}}.
}_{\text{\emph{{\footnotesize remainder terms including the trees which have the same order conditions with trees in $T_1 \cup T_2$}}}
}
\end{aligned}
\end{equation}
%%%%%%%%%%%%%%%%%%%%%%%%%%%
%\begin{equation}\label{eq:Stau}
%\mathcal{S}_i(\tau_k)(u)=
%\begin{cases}
%\tfrac{c_i^{|\tau_k |}}{|\tau_k|!}\; u^{(|\tau_k |)}(t), & \text{if} \ \tau_k\in T_0 \\[2mm]
%\tfrac{(\vert \tau_k\vert -1)!}{\sigma(\tau_k)}  \psi_{|\tau_k|,i} (hA)\,g^{(\ell)}(u)\big( u^{(|\tau_{k1}|)}(t),\ldots, u^{(|\tau_{k\ell}|)}(t) \big), &   \text{if} \ \tau_k= [\tau_{k1},\ldots, \tau_{k\ell} ]  \in T_1 \\[2mm]
%\tfrac{\sigma(\tau_{k1})\ldots\sigma(\tau_{k\ell})}{\sigma(\tau_k)}\ \sum_{j=2}^{i-1}a_{ij}(hA) g^{(\ell)}(u)\big( \mathcal{S}_j(\tau_{k1})(u),\ldots,\mathcal{S}_j(\tau_{k\ell})(u) \big), &  \text{if} \ \tau_k= [\tau_{k1},\ldots, \tau_{k\ell} ] \in T_2
%\end{cases}
%\end{equation}
The assumption on $A$ implies that the semigroup $\ee^{tA}$ can be bounded uniformly (see, e.g.,  \cite{H81,PAZY83}) and thus $b_i (hA), a_{ij}(hA)$, and  $\psi_{j} (hA)$ are  uniformly bounded as well. Together with the remaining assumptions of Lemma~\ref{Lemma2.1}, one can then truncate \eqref{eq:local_err} up to any stiff order $p =|\tau|$ which requires the conditions \eqref{eq:ocExpRK}. 
%The conclusion of Lemma~\ref{Lemma2.1} can be now obtained directly by applying  \cite[Theorem 5.1]{LO13} to $T$.
\end{proof}
By using Lemma~\ref{Lemma2.1}, we can explicitly provide the 36 order conditions required for stiff ExpRK methods of order 6 by only drawing trees in $T_1 \cup T_2$ up to order 6 and using  \eqref{eq:ocExpRK}, see Table~\ref{Table1} below. There, we note that the 20 order conditions from No. 17 to 36  are new. 
%(the first 16 conditions have been already derived in \cite{LO12b}).  
With this result, we now present convergence results for ExpRK methods of order 6.

%%------------- Table of stiff order conditions for ExpRK methods -------------
%\FloatBarrier
\begin{table}[H]
\begin{center}
\caption{Order trees and stiff order conditions for explicit ExpRK methods up to order~6. The variables $Z$, $J$, $K$, $L$ denote arbitrary square matrices, and $B$ an arbitrary bilinear mapping of appropriate
dimensions. The functions $\psi_{q,i}$ are defined in \eqref{eq2.9}.}\label{Table1}
\includegraphics[scale=0.6]{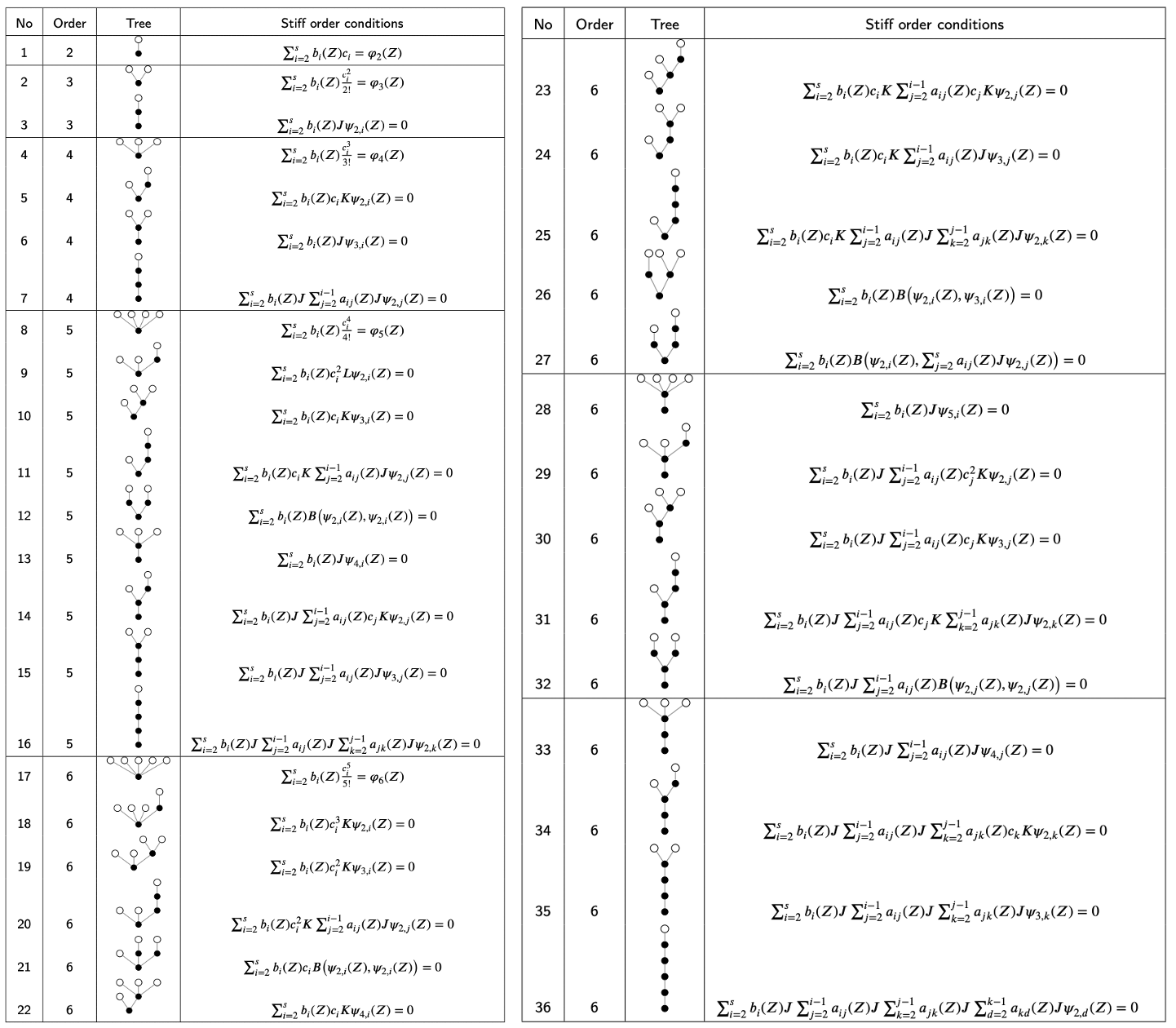}
\end{center}
\vspace{-4mm}
\end{table}
\vspace{-3mm}

%% SECTION 3: ------------------------------------------------------------------
\section{Convergence results for sixth-order stiff ExpRK methods}
\label{sec3:convergence}
Throughout this section, we denote $C$ as a generic constant that may have different values at different occurrences.
%%--THEOREM 3.1--------
\begin{theorem}\label{Theorem3.1}
Under the assumptions of Lemma~\ref{Lemma2.1}, an explicit ExpRK method \eqref{eq:ExpRK} applied to the IVP \eqref{eq1.1}, that fulfills all the 36 order conditions of Table~\ref{Table1} either strictly in the strong sense (without relaxing any conditions) or with the exception that only condition No. 17 holds in a weakened form  $\sum_{i=2}^{s}b_{i}(0)\frac{c_i^5}{5!} = \varphi_6(0) =\frac{1}{6!}$, converges with order 6. 
In particular, its global error  $ e_{n} = u_{n} - u(t_{n})$ satisfies the bound 
%the numerical solution $u_n$ satisfies the global error bound
\begin{equation}\label{eq3.1}
\|e_n\| = \| u_n -u(t_n)\|\leq C h^6
\end{equation}
uniformly on time intervals \ $t_0 \leq  t_n =t_0+nh \leq  t_{\text{end}}$ with a constant $C$ that depends on $t_{\text{end}}-t_0$, but is independent of $n$ and $h$.
\end{theorem}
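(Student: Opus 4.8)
The plan is to combine the local error representation \eqref{eq:local_err} from Lemma~\ref{Lemma2.1} with a stability (Lady Windermere's fan) argument adapted to the parabolic setting. First I would record that if a single step is started from the exact value $u(t_n)$, then the 36 order conditions of Table~\ref{Table1}, held strongly, annihilate every term of order $\le 6$ in \eqref{eq:local_err}, so the local error obeys $\|\tilde e_{n+1}\| \le C h^{7}$. Next I would set up the global error recursion by writing $e_{n+1} = (u_{n+1}-\hat u_{n+1}) + \tilde e_{n+1}$, where $\hat u_{n+1}$ is the one-step numerical solution started from $u(t_n)$. The difference $u_{n+1}-\hat u_{n+1}$ equals $\ee^{hA}e_n$ plus terms $h\sum_i b_i(hA)(D_{ni}-\hat D_{ni})$ that are Lipschitz in the internal stage errors and hence in $e_n$; here I would invoke the uniform boundedness of $b_i(hA)$ and $a_{ij}(hA)$ (from the analyticity of $\ee^{tA}$, exactly as in the proof of Lemma~\ref{Lemma2.1}) together with the moderate Lipschitz constant assumed on $g$ and its derivatives.

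Unfolding this recursion gives $e_n = \sum_{j=1}^{n}(\text{propagator})\,\tilde e_j + (\text{accumulated nonlinear terms})$. A discrete Gronwall lemma then closes the estimate: summing $O(1/h)$ local errors of size $h^{7}$ against uniformly bounded propagators yields $\|e_n\|\le C h^{6}$, with $C$ depending on $t_{\text{end}}-t_0$ but not on $n$ or $h$, proving the strong-form case \eqref{eq3.1}. The only nonstandard input needed here is the uniform boundedness of the coefficient matrix functions and of the products of semigroup factors, which follows from sectoriality.

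The hard part is the weakened case. Relaxing condition No.~17 to hold only at $z=0$ leaves in \eqref{eq:local_err} an additional defect $\delta_{n+1}=h^{6}R(hA)\,w_n$, where $R(hA)=\sum_{i=2}^{s}b_i(hA)\tfrac{c_i^{5}}{5!}-\varphi_6(hA)$ satisfies $R(0)=0$ and $w_n=g^{(5)}(u(t_n))\big(u'(t_n),\ldots,u'(t_n)\big)$ depends smoothly on $t_n$. A single such defect is only $O(h^{6})$, and summing over $O(1/h)$ steps would naively degrade the global error to $O(h^{5})$; the crux is to recover the missing power of $h$. Because $R(0)=0$ matches the first-order zero of $\ee^{z}-1$ at the origin, the quotient $\Phi(z)=R(z)/(\ee^{z}-1)$ extends analytically and is uniformly bounded on the sector containing the spectrum of $hA$, so that $R(hA)=(\ee^{hA}-I)\Phi(hA)$ with $\Phi(hA)$ uniformly bounded. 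Substituting this into the accumulated defect $\sum_{j}\ee^{(n-j)hA}R(hA)w_j$ and using $\ee^{(n-j)hA}(\ee^{hA}-I)=\ee^{(n-j+1)hA}-\ee^{(n-j)hA}$, a summation by parts (Abel summation) telescopes the propagators: the boundary terms are $O(1)$, while the interior sum contains the differences $w_{j+1}-w_j=O(h)$ supplied by the smoothness of the elementary differential along the exact solution. Reinstating the $h^{6}$ prefactor then gives an $O(h^{6})$ contribution, so the relaxation of condition No.~17 costs nothing asymptotically.

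I expect the technically demanding points to be precisely (i) verifying that $\Phi(hA)$ is uniformly bounded, i.e.\ that the zeros of $\ee^{z}-1$ stay away from the spectral sector of the parabolic operator $A$, and (ii) making the summation-by-parts estimate rigorous in the presence of the nonlinear error propagation, by first isolating the linear-dominant part of the recursion to which the telescoping applies and absorbing the remaining nonlinear differences into the discrete Gronwall step.
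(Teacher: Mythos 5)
Your proposal is correct, and its skeleton coincides with the paper's proof: in the strong case you get the local error $\tilde e_{n+1}=\mathcal{O}(h^7)$ from \eqref{eq:local_err}, propagate it through the error recursion \eqref{eq3.2}, and close with a discrete Gronwall lemma; in the weakened case you isolate the defect $h^6R(hA)w_n$ with $R(0)=0$ and recover the missing power of $h$ from analyticity of the semigroup. The one genuine difference is how that last recovery is organized. The paper factors $R(hA)=\psi_6(hA)-\psi_6(0)=\widehat\psi(hA)\,hA$ with $\widehat\psi$ bounded and then invokes the parabolic smoothing bound $\bigl\|hA\sum_{j=1}^{n}\ee^{jhA}\bigr\|\leq C$; you instead factor $R(hA)=(\ee^{hA}-I)\Phi(hA)$ and telescope via Abel summation, using $w_{j+1}-w_j=\mathcal{O}(h)$. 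These are two packagings of the same estimate: the standard proof of the smoothing bound is exactly your telescoping identity, and the uniform boundedness of $\Phi$ (zeros of $\ee^z-1$ staying off the spectral sector) is the operator-theoretic twin of the boundedness of $hA(\ee^{hA}-I)^{-1}$. What your route buys is that it makes explicit the summation-by-parts treatment of the time-dependent vector $w_j=g^{(5)}(u(t_j))\bigl(u'(t_j),\ldots,u'(t_j)\bigr)$ — a detail the paper's one-line appeal to the smoothing bound leaves implicit, since that bound as stated applies verbatim only when the vector multiplying the sum is constant. What the paper's route buys is economy: factoring out $hA$ avoids any discussion of the nonzero roots $2\pi ik$ of $\ee^z-1$, because $z$ vanishes only at the origin, so the boundedness of the correcting operator is immediate from sectoriality. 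Both arguments then feed the resulting $Ch^6$ term into the same Gronwall step, so your conclusion and constants have the same character as \eqref{eq3.1}.
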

\begin{proof}
 It was shown in \cite[Sect. 4.3]{LO14b} that 
\begin{equation} \label{eq3.2}
e_{n}=u_n -u(t_n)= h\sum_{j=0}^{n-1} \ee^{(n-j)hA} \mathcal{K}_j (e_j)e_j + \sum_{j=0}^{n-1} \ee^{jhA}\tilde{e}_{n-j},
\end{equation}
where 
the operators $\mathcal{K}_{j}$ depend on $e_j$ (as well as $b_i (hA), a_{ij}(hA),   g^{(m)}(.)(.,\cdots,.)$), which can be shown to be bounded on $X$ under the given assumptions (see \cite[Lemma 4.1.]{LO14b}).
Let us now consider the first scenario where all 36 stiff order conditions for ExpRK methods of order 6 are strictly satisfied. 
In this case, it is clear from \eqref{eq:local_err} that, for $ |\tau| =6$, the local error $\tilde{e}_{n+1}=\mathcal{O}(h^{7})$.
%applying \cite[Theorem 5.1]{LO13} for $p = 6$ immediately implies that $\tilde{e}_{n+1}=\mathcal{O}(h^{7})$.
The assumption on $A$ implies
$
\|\ee^{tA}\|_{X\leftarrow X}\leq C, \ t\geq 0.
$
%(the constant $C$ can be chosen uniformly).
Thefore, one can bound $e_n$ in \eqref{eq3.2} as 
\begin{equation} \label{eq3.3}
\|e_{n}\| \le h\sum_{j=0}^{n-1} C \|e_j \|+ \sum_{j=0}^{n-1}C h^7.
\end{equation}
An application of a discrete Gronwall lemma to \eqref{eq3.3} shows the error bound  \eqref{eq3.1} at once. 

Finally, we consider the second scenario wherein condition No. 17 is relaxed to be fulfilled with   $A =0$ only 
(i.e, $\psi_{6} (0) =0$), while all other conditions are strictly satisfied in the strong sense. 
In this case,  \eqref{eq:local_err} can be simplified to 
\begin{equation} \label{eq3.4}
\tilde{e}_{n+1}=h^6 \big( \psi_{6} ( h A)-\psi_{6} (0)\big)  g^{(5)}(u(t_n))\big(u'(t_n), u'(t_n),u'(t_n), u'(t_n),u'(t_n)\big)
+\mathcal{O}(h^{7}).
\end{equation}
%where $D^{5} g(t_n, u(t_n))$ is the fifth-order partial Fr\'echet  derivative  of $g(t, u)$ (with respect to $u$) evaluated at $u(t_n)$.
Clearly, one can show that there exists a bounded operator $\tilde{\psi} (hA)$ such that  $ \psi_{6} ( h A)-\psi_{6} (0)=\widehat{\psi} (hA)hA$. 
Using this, inserting \eqref{eq3.4} into  \eqref{eq3.2} and employing the parabolic smoothing property of the analytical semigroup (implying the bound 
$\left \|h A \sum_{j=1}^{n}\ee^{j h A} \right\|_{X\leftarrow X}  \leq C$),  one gets 
$
%\begin{equation} \label{eq3.6}
\|e_{n}\| \le h\sum_{j=0}^{n-1} C \|e_j \|+ Ch^6 + \sum_{j=0}^{n-1}C h^7,
%\end{equation}
$
which again proves  \eqref{eq3.1} as done for  \eqref{eq3.3}.
\end{proof}
%%%%%%%%%%%%%%%%%%%%%%%%%%%%%%%%%%
\begin{remark}\label{remark3.1} 
Similar to the convergence proof for ExpRK methods of order 5 (see \cite[Thm. 4.1]{LO14b}), we note that the result stated in Theorem~\ref{Theorem3.1} (the error bound \eqref{eq3.1}) also remains valid if all the order conditions of order 6 (from No. 17 to 36) are satisfied in a weakened form with $b_i (0)$ substituted for $b_i (Z)$. However, we only considered weakening condition No. 17 due to two primary advantages (similar to the approach presented in \cite{Luan2021} for fourth-and fifth-order ExpRK methods). Firstly, No. 17 depends solely on $b_i (hA), c_i$ and thus relaxing it leads to a scalar algebraic equation for $c_i$ (which can be solved easily). Secondly, satisfying conditions from No. 18 to 36 stricly in the strong sense (which, in addition, depend also on $a_{ij}(hA)$)  not only offers better stability when solving stiff problems but also enables the construction of multiple parallel internal stages $U_{ni}$, thereby leading to more efficient schemes (if otherwise, i.e., weakening them results in methods where each internal stage depends on preceding stages, restricting them to sequential implemenation, see e.g., the construction of ExpRK methods of order 5 in \cite{LO14b}). 
\end{remark}
%%%%%%%%%%%%%%%%%%%%%%%%%%%%%%%%%%
%% SECTION 4: ------------------------------------------------------------------
\section{Derivation of sixth-order stiff ExpRK methods}
\label{sec4:derivation}
Based on Theorem~\ref{Theorem3.1}, we now derive two families of sixth-order ExpRK methods.
%which fulfill all the 36 order conditions of Table~\ref{Table1} in the strong sense or with the exception that condition No. 17 holds in a weakened form. 
While there exist many solutions, our aim is to construct efficient schemes which allow groups of \emph{parallel stages}, i.e., internal stages that are independent of each other.

In  \cite{Luan2021}, it has been shown that solving the first 16 conditions for fifth-order parallel stages ExpRK methods requires $s=10$ stages (with relaxing one condition, namely, No. 8) and $s=11$ stages (without relaxing any conditions). 
To satisfy the 20 additional conditions (from No. 17 to 36) for order 6, a value of $s \geqslant 11$ is thus necessary.
Applying a similar approach to the order conditions outlined in \cite{Luan2021}, it can be verified (but very tedious) that using $s=11, 12, 13, 14$ is not sufficient to fulfill all the order conditions of Table~\ref{Table1} in the context of Theorem~\ref{Theorem3.1}. We omit the details.
 On the other hand,  it is indeed possible  to satisfy all the conditions in the strong sense with $s=16$ and if condition No. 17 is relaxed, one can use $s=15$.
We will first show the latter case.

For convenience, we use the following abbreviations $b_i = b_i (hA),  a_{ij} = a_{ij}(hA),  \varphi_i = \varphi_i (h A), \varphi_{j,i} = \varphi_j (c_i h A) $.
  
%%%%%%%%%%%%%%%%%%%%%%%%%%%%%%%%%%%%%%%
\subsection{A family of sixth-order parallel-stage ExpRK schemes with $s=15$} 
In this case, we assume that condition No. 17 can be relaxed to  $\sum_{i=2}^{15}b_{i}(0)\frac{c_i^5}{5!} = \varphi_6(0) =\frac{1}{6!}$ and will solve for the remaining 35 conditions in Table~\ref{Table1}.  First, using  \eqref{eq2.9} for $j = 2, \ldots, 5$ and the fact that  $\{\varphi_j \}$ are linearly independent, one can show that 
 $\psi_{j,2}=-c_2^j \varphi_{j,2} \neq 0 $, and that $\psi_{j,3}$, $\psi_{j,4}, \psi_{j,5}$ cannot be zero for all $j = 2, \ldots, 5$. 
Thus, in order for later satisfy  consider conditions 3, 5, 6, 9,  10, 12, 13,  18, 19, 21, 22, 26, and 28 (which involve the matrix functions $\psi_{j,i} (Z)$, $j = 2, \ldots, 5$) in the strong sense (with arbitrary square matrices $Z, J, K, L$), it strongly suggests to choose $b_{i}=0 \ (i=2, \ldots, 11)$.
Using this, we next solve conditions 1, 2, 4, 8, and 17 (in weakened form) 
and get the unique solution
\begin{equation} \label{eq4.3}
b_{i} = \frac{-c_{j} c_{k} c_{l} \varphi_{2}
+ 2 (c_{j} c_{k} +c_{j} c_{l}+ c_{k} c_{l})  \varphi_{3}
-6 (c_{j} +c_{k} +c_{l} ) \varphi_{4}
+24 \varphi_{5}}
{c_{i}(c_{i}-c_{j})(c_{i}-c_{k})(c_{i}-c_{l})}, \q i=12,13,14,15
\end{equation}
where 
$j,  k, l \in\{12,13,14,15\}, j \neq k \neq l \neq i $ and nodes $c_{12}, c_{13}, c_{14}, c_{15}>0$  are distinct  and satisfy the following relation
\begin{equation} \label{eq4.4}
\frac{1}{5}\sum_{i=12}^{15} c_i - \frac{1}{4} \sum_{\substack{i, j =12\\ i \neq j}}^{15} c_i c_j  + \frac{1}{3}  \sum_{\substack{i, j, k =12\\ i \neq j \neq k}}^{15} c_i c_j c_k
-\frac{1}{2} \prod_{i=12}^{15} c_i = \frac{1}{6}.
\end{equation} 
Since $ b_{12},b_{13},b_{14},b_{15} \neq 0$, one has to enforce $\psi_{i,j}= 0$  for $i=2,\ldots,5; j = 12, \ldots, 15$
to strictly satisfy conditions 3, 5, 6, 9,  10, 12, 13,  18, 19, 21, 22, 26, and 28.
Using this, requiring $\psi_{i, j}=0$ for $i= 2, 3, 4; j=8,\ldots,11$, and choosing $a_{i j}=0$ for $i =12,\ldots, 15; \ j=2,\ldots,7$,
one can fulfill conditions 7, 15 and 33  
 in the strong form and have the stages $\{U_{nj}\}_{j=12,\ldots,15}$ 
 that are independent of  $\{U_{nj}\}_{j=2,\ldots,7}$. 
With all the requirements above,  conditions 11, 14, 20, 23, 24, 27, 29, 30, 32  are automatically fulfilled.  

When solving the linear system $\psi_{2,j}= 0$ ($j = 12, \ldots, 15$), several $a_{ij}$ can be taken as free parameters and we choose 
$a_{i j}=0$ for $i =13, 14, 15; \ j=12, 13, 14$ 
in order to have the four parallel stages  $\{U_{nj}\}_{j=12,\ldots,15}$, 
and thus the remaining $a_{ij}$ appearing in this system can be then uniquely solved as
\begin{equation}\label{eq4.11}
 a_{ij} 
=\frac{-c_{i}^{2}c_{d} c_{k} c_{l} \varphi_{2,i}
+2 c_{i}^{3}(c_{d} c_{k} +c_{d} c_{l}+ c_{k} c_{l}) \varphi_{3,i}
-6 c_{i}^{4}(c_{d} +c_{k} +c_{l}  )\varphi_{4,i}
+24 c_{i}^{5}\varphi_{5,i}}{c_{j}(c_{j}-c_{d})(c_{j}-c_{k})(c_{j}-c_{l})},  \q i=12,13,14,15
\end{equation}
with
$j, k, l, d  \in\{8,9,10,11\}, j \neq k \neq l \neq d $ and $c_8, c_9,c_{10},c_{11}>0$ are distinct nodes.

Next, using all the above constraints, one can show that conditions 16 and 35 can be fulfilled in their strong forms if requiring
 $a_{i j}=0$ for $i =8,\ldots, 11; \ j=2, 3, 4$
 and 
$ \psi_{2, j}=\psi_{3, j}=0 \ (j=5,6,7)$.
Similarly, we then choose free parameters
 $a_{i j}=0$ for $i =9,10, 11; \ j=8, 9, 10$
to have four parallel stages  $\{U_{nj}\}_{j=8,\ldots,11}$. 
With this, conditions 25, 31 and 34  are now automatically fulfilled.
The system $\psi_{i, j}=0$ for $i= 2, 3, 4; j=8,\ldots,11$ is then solved with the unique solution
\begin{equation}\label{eq4.16}
a_{i j}=\frac{c_{i}^{2} c_{k} c_{l} \varphi_{2, i}-2 c_{i}^{3}(c_{k}+c_{l}) \varphi_{3, i} + 6 c_{i}^{4} \varphi_{4, i}}{c_{j}(c_{j}-c_{l})(c_{j}-c_{k})}, \q i=8,9,10,11; j, k, l \in\{5,6,7\}, j \neq k \neq l, \ \text{and}\  c_5, c_6,c_7 \ \text{are distinct}.
\end{equation}
Finally, we solve the only remaining condition 36 by taking into account 
all the above findings. It can be satisfied in the strong sense by enforcing $a_{k 2}=0 \ ( k =5,6,7)$ and $\psi_{2, 3}=\psi_{2, 4}=0$. This linear system and the above $ \psi_{2, j}=\psi_{3, j}=0 \ (j=5,6,7)$ can be solved by requiring $a_{43} = a_{65}=a_{75}=a_{76}=0$ to have two more groups of parallel stages $\{U_{nj}\}_{j=5,6,7}$ and $\{U_{nj}\}_{j=3,4}$, resulting in
\begin{equation}\label{eq4.23}
a_{i j}=\frac{-c_{i}^{2} c_{k} \varphi_{2, i}+2 c_{i}^{3} \varphi_{3, i}}{c_{j}(c_{j}-c_{k})}, \ \text{for} \ i=5,6,7  ;\ j, k \in\{3,4\},  \ j \neq k 
 \ \text{and} \ a_{ij}=\frac{c_i^2}{c_j} \varphi_{j,i} \ \text{for} \  i=3, 4; j =2.
\end{equation}
Putting altogether, we obtain the following family of sixth-order 15-stage stiff ExpRK methods, which will be called $\mathtt{ExpRK6s15}$:
\begin{align*}
U_{n2} &=u_{n}+\varphi_{1}(c_{2} h A)c_{2} h F(t_n,u_{n}),\\
U_{n\ell}& =u_{n}+\varphi_{1}(c_l h A)c_l h  F(t_n,u_{n})+h  a_{\ell 2} (h A)D_{n2},  &&\ell=3,4\\
U_{nm}&=u_{n}+\varphi_{1}(c_{m} h A)c_{m} h  F(t_n,u_{n})+h  (a_{m3} D_{n3}+a_{m4} D_{n4}), && m=5,6,7\\
U_{nq}&=u_{n}+\varphi_{1}(c_{q} h A) c_{q} h  F(t_n, u_{n}) + h \sum_{j=2}^{4} \varphi_{j}(c_q hA) c^{j}_{q}  \sum_{i=5}^{7}\rho_{ji} D_{ni},
&& q=8,\ldots,11\\
U_{nk}& =u_{n}+\varphi_{1}(c_{k} h A) c_{k} h  F(t_n, u_{n}) + h \sum_{j=2}^{5} \varphi_{j}(c_k hA) c^{j}_{k}  \sum_{i=8}^{11}\mu_{ji} D_{ni},
&&k=12,\ldots,15\\
u_{n+1}&=u_{n}+\varphi_{1}(h A) h  F(t_n, u_{n})
+ h \sum_{j=2}^{5} \varphi_{j}(hA)   \sum_{i=12}^{15}\mu_{ji} D_{ni},
\end{align*}
%\end{subequations*}
where
%\begin{equation}
$
\rho_{ji}= 
\begin{cases}
\tfrac{c_{k} c_{l} }{c_{i}(c_{i}-c_l)(c_{i}-c_{k})}, &  j = 2\\
\tfrac{-2(c_{k}+c_l )}{c_{i}(c_{i}-c_l)(c_{i}-c_{k})}, & j = 3 \\
\tfrac{6}{c_{i}(c_{i}-c_l)(c_{i}-c_{k})}, & j = 4
 \end{cases}
% \end{equation}
$
\ ($i, k, l \in\{5, 6, 7\}$, $i \neq k \neq l$), and
$
%\begin{equation}
\mu_{ji}= 
\begin{cases}
\tfrac{-c_{d} c_{k} c_{l}}{c_{i}(c_{i}-c_{d})(c_{i}-c_{k})(c_{i}-c_l)}, &  j = 2\\
\tfrac{2(c_{d} c_{k} +c_{d} c_{l}+ c_{k} c_{l})}{c_{i}(c_{i}-c_{d})(c_{i}-c_{k})(c_{i}-c_l)}, & j = 3 \\
\tfrac{-6(c_{d} +c_{k} +c_{l})}{c_{i}(c_{i}-c_{d})(c_{i}-c_{k})(c_{i}-c_l)}, & j = 4\\
\tfrac{24}{c_{i}(c_{i}-c_{d})(c_{i}-c_{k})(c_{i}-c_l)},  & j = 5
 \end{cases}
 %\end{equation}
 $
with $i\in\{8, 9, \ldots, 15\}$ for $ d, k, l \in\{8, 9, 10, 11\}, \ i \neq d \neq k \neq l$ . 

As observed, while $\mathtt{ExpRK6s15}$ uses $s=15$ stages, it can be implemented with the cost of a 6-stage method since it has 4 groups of parallel stages   $\{U_{nj}\}_{j=3,4}$, $\{U_{nj}\}_{j=5,6,7}$, $\{U_{nj}\}_{j=8,\ldots,11}$, and $\{U_{nj}\}_{j=12,\ldots,15}$,
which can be computed simultaneously or in parallel.\\
For our numerical experiments in Section~\ref{sec:experiments}, we take
$c_2 =c_3 =c_5= \tfrac{1}{2}, c_4 = c_9 = c_{13} = \tfrac{1}{3}, c_6 = c_{15}= \tfrac{1}{5}, c_7 = \tfrac{1}{4},  c_8 = \tfrac{18}{25},  c_{10} = c_{14} = \tfrac{3}{10},    c_{11} = \tfrac{1}{6},  c_{12} = \tfrac{90}{103}, c_{15} =  \tfrac{1}{5}$,
which satisfy the constraint \eqref{eq4.4} due to relaxing  condition  17.

%%%%%%%%%%%%%%%%%%%%%%%%%%%%%%%%%%%%%%
\subsection{A family of sixth-order parallel-stage ExpRK schemes with $s=16$} 
When $s=16$, it becomes feasible to solve all 36 stiff order conditions strictly  using a very similar approach to the $s=15$ case. We note that in this case such a restriction on the nodes $c_i$ like  \eqref{eq4.4} is no longer required. 
While we omit the specific details, we present the final result for the following family of sixth-order 16-stage stiff ExpRK methods, which will be referred to as $\mathtt{ExpRK6s16}$:
\begin{align*}
U_{n2} &=u_{n}+\varphi_{1}(c_{2} h A)c_{2} h F(t_n,u_{n}),\\
U_{n\ell}& =u_{n}+\varphi_{1}(c_l h A)c_l h  F(t_n,u_{n})+h  a_{\ell 2} (h A)D_{n2},  &&\ell=3,4\\
U_{nm}&=u_{n}+\varphi_{1}(c_{m} h A)c_{m} h  F(t_n,u_{n})+h  (a_{m3} D_{n3}+a_{m4} D_{n4}), && m=5,6,7\\
U_{nq}&=u_{n}+\varphi_{1}(c_{q} h A) c_{q} h  F(t_n, u_{n}) + h \sum_{j=2}^{4} \varphi_{j}(c_q hA) c^{j}_{q}  \sum_{i=5}^{7}\rho_{ji} D_{ni},
&& q=8,\ldots,11\\
U_{nk}& =u_{n}+\varphi_{1}(c_{k} h A) c_{k} h  F(t_n, u_{n}) + h \sum_{j=2}^{5} \varphi_{j}(c_k hA) c^{j}_{k}  \sum_{i=8}^{11}\mu_{ji} D_{ni},
&&k=12,\ldots,16\\
u_{n+1}&=u_{n}+\varphi_{1}(h A) h  F(t_n, u_{n})
+ h \sum_{j=2}^{6} \varphi_{j}(hA)   \sum_{i=12}^{16}\theta_{ji} D_{ni},
\end{align*}
%\end{subequations*}
where
%\begin{equation}
$
\rho_{ji}= 
\begin{cases}
\tfrac{c_{k} c_{l} }{c_{i}(c_{i}-c_l)(c_{i}-c_{k})}, &  j = 2\\
\tfrac{-2(c_{k}+c_l )}{c_{i}(c_{i}-c_l)(c_{i}-c_{k})}, & j = 3 \\
\tfrac{6}{c_{i}(c_{i}-c_l)(c_{i}-c_{k})}, & j = 4
 \end{cases}
 $
% \end{equation}
($i, k, l \in\{5, 6, 7\}$, $i \neq k \neq l$),  
$
%\begin{equation}
\mu_{ji}= 
\begin{cases}
\tfrac{-c_{d} c_{k} c_{l}}{c_{i}(c_{i}-c_{d})(c_{i}-c_{k})(c_{i}-c_l)}, &  j = 2\\
\tfrac{2(c_{d} c_{k} +c_{d} c_{l}+ c_{k} c_{l})}{c_{i}(c_{i}-c_{d})(c_{i}-c_{k})(c_{i}-c_l)}, & j = 3 \\
\tfrac{-6(c_{d} +c_{k} +c_{l})}{c_{i}(c_{i}-c_{d})(c_{i}-c_{k})(c_{i}-c_l)}, & j = 4\\
\tfrac{24}{c_{i}(c_{i}-c_{d})(c_{i}-c_{k})(c_{i}-c_l)},  & j = 5
 \end{cases}
 %\end{equation}
 $
($i, d, k, l \in\{8, 9, \ldots, 11\}$, \q $ i \neq d \neq k \neq l$), and 
$
%\begin{equation}
\theta_{ji}= 
\begin{cases}
\tfrac{-c_{j} c_{d} c_{k} c_{l}}  {c_{i}(c_{i}-c_{j})(c_{i}-c_{d})(c_{i}-c_{k})(c_{i}-c_{l})},   &  j = 2 \\
\tfrac{2(c_{j} c_{k} c_{d}+c_{j} c_{k}c_{l}+ c_{k} c_{d}c_{l}+c_{d} c_{l}c_{j})}  {c_{i}(c_{i}-c_{j})(c_{i}-c_{d})(c_{i}-c_{k})(c_{i}-c_{l})},  & j = 3 \\
\tfrac{-6(c_{j} c_{k}+c_{j} c_{d}+c_{j} c_{l}+c_{k} c_{d}+c_{k} c_{l}+c_{d} c_{l}  )}  {c_{i}(c_{i}-c_{j})(c_{i}-c_{d})(c_{i}-c_{k})(c_{i}-c_{l})},   & j = 4 \\
\tfrac{24(c_{j} +c_{k} +c_{l}+c_{d})}   {c_{i}(c_{i}-c_{j})(c_{i}-c_{d})(c_{i}-c_{k})(c_{i}-c_{l})},   & j = 5\\
\tfrac{-120} {c_{i}(c_{i}-c_{j})(c_{i}-c_{d})(c_{i}-c_{k})(c_{i}-c_{l})},   & j = 6
 \end{cases}
 %\end{equation}
 $ \q 
($i, j, d, k, l \in\{12, \ldots, 16 \}$, \ $ i \neq j \neq d \neq k \neq l$).

Similarly, $\mathtt{ExpRK6s16}$ also has 4 groups of parallel stages  $\{U_{nj}\}_{j=3,4}$, $\{U_{nj}\}_{j=5,6,7}$, $\{U_{nj}\}_{j=8,\ldots,11}$, and $\{U_{nj}\}_{j=12,\ldots,16}$.
%which are $\left\{U_{n 3}, U_{n 4}\right\},\left\{U_{n 5}, U_{n 6}, U_{n 7}\right\}$,  $\left\{U_{n 8}, U_{n 9}, U_{n 10}, U_{n 11}\right\}$, and $\left\{U_{n 12}, U_{n 13}, U_{n 14}, U_{n 15}, U_{n 16}\right\}$. 
This again offers a great advantage is that it can be implemented with the cost of a 6-stage method. 
For our numerical experiments, we choose  
%c2=1/2;c3=1/2;c4=1/3;c5=1/2;c6=1/5;c7=1/4;c8=1/2;c9=1/5;c10=1/4;c11=1/3;c12=1/2;c13=1/5;c14=1/4;c15=1/3;c16=1;
$c_2 =c_3 =c_5= c_8 = c_{12}=  \tfrac{1}{2}, c_4 =  c_{11} = c_{15} = \tfrac{1}{3}, c_6 = c_9 = c_{13}= \tfrac{1}{5}, c_7 = c_{10} = c_{14} = \tfrac{1}{4}, c_{16} =1$.

%% SECTION 5: ------------------------------------------------------------------
\section{Numerical experiments}
\label{sec:experiments}
% Luan added:
In this section, we demonstrate  the sharpness of the error bound  in Theorem~\ref{Theorem3.1} when applying the two newly constructed sixth-order schemes $\mathtt{ExpRK6s15}$ and  $\mathtt{ExpRK6s16}$. We also compare their efficiency when implemented with and without the simultaneous computation of parallel stages.
To implement the new schemes, we use the code \texttt{phipm\_simul\_iom.m}, see \cite{Luan2021,Luan18}.

\begin{example}\label{ex1}\rm
Consider the following semilinear parabolic  PDE  in  $X=L^2([0,1])$ (see \cite{HO05}) for $u(x,t)$,  $x\in [0,1], t\in [0,1]$, subject to homogeneous Dirichlet boundary conditions,
\begin{equation} \label{example1}
\frac{\partial u(x,t)}{\partial t}- \frac{\partial^2 u(x,t)}{\partial x^2}   =\frac{1}{1+u^{2}(x,t)}+\Phi (x,t).
\end{equation}
With a suitable choice of the source function $\Phi (x,t)$, the exact solution is $u(x,t)=x(1-x)\ee^t$.  
%This PDE fits into our framework with $X=L^2([0,1])$.
\end{example}
For a spatial discretization of \eqref{example1}, we use standard second order finite differences with $200$ grid points, leading to a very stiff system  (with $\|A\|  \approx1.6 \times10^5$) of the form  \eqref{eq1.1}.  
The resulting system is then integrated on the time interval $[0, 1]$  using constant step sizes $h=\tfrac{1}{2}, \tfrac{1}{4}, \tfrac{1}{8}, \tfrac{1}{16}, \tfrac{1}{32}$. 
The errors are measured in a discrete $L^2$ norm  at the final time $t_{\text{end}}=1$.

As seen from  Fig.~\ref{fig5.1}, 
%we plot the convergence rates (left) and the total CPU time (right) for the two new schemes $\mathtt{ExpRK6s15}$ and  $\mathtt{ExpRK6s16}$. 
the left diagram clearly shows that they fully achieve order 6 even at large time steps, thereby verifying the sharpness of our error bound  in Theorem~\ref{Theorem3.1}. With the same set of step sizes, $\mathtt{ExpRK6s16}$ exhibits slightly higher accuracy compared to $\mathtt{ExpRK6s15}$, while both schemes require similar CPU times. As anticipated, the simultaneous computation of parallel stages significantly reduces the CPU times for the same level of accuracy, as shown in the right diagram.
\begin{figure}[ht!]
\centering
\begin{tabular}{cc}
\epsfig{file=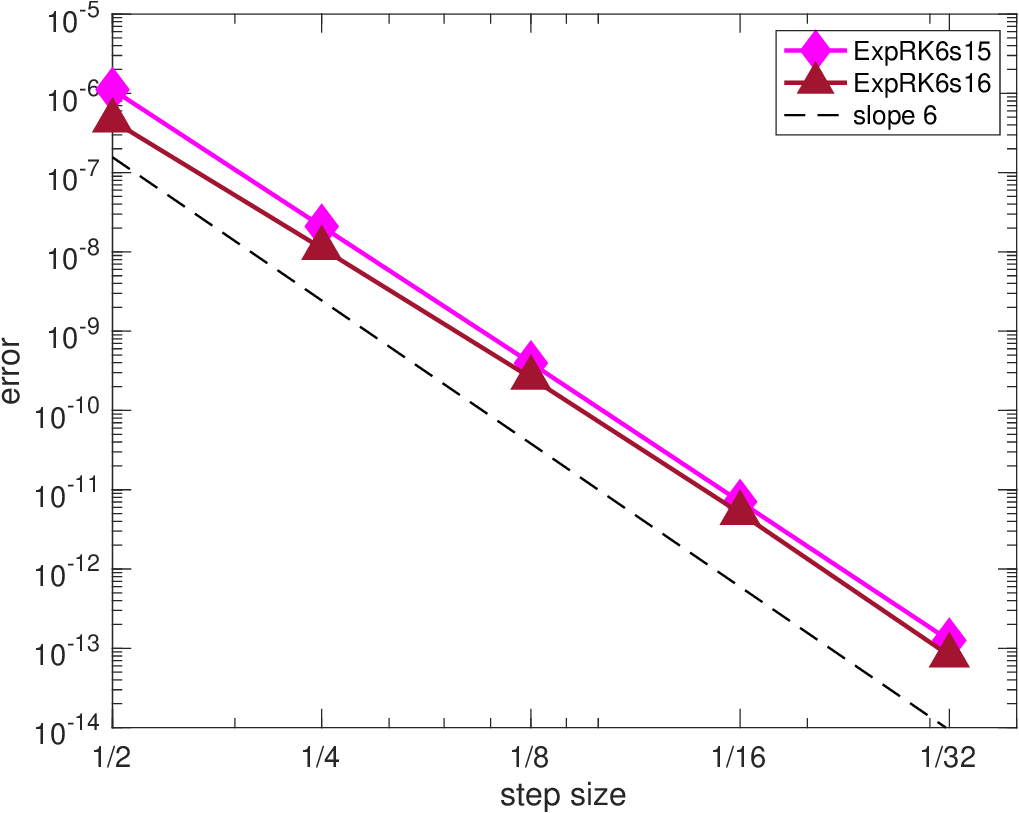,width=0.33\linewidth,clip=}
\hspace{4mm}
\epsfig{file=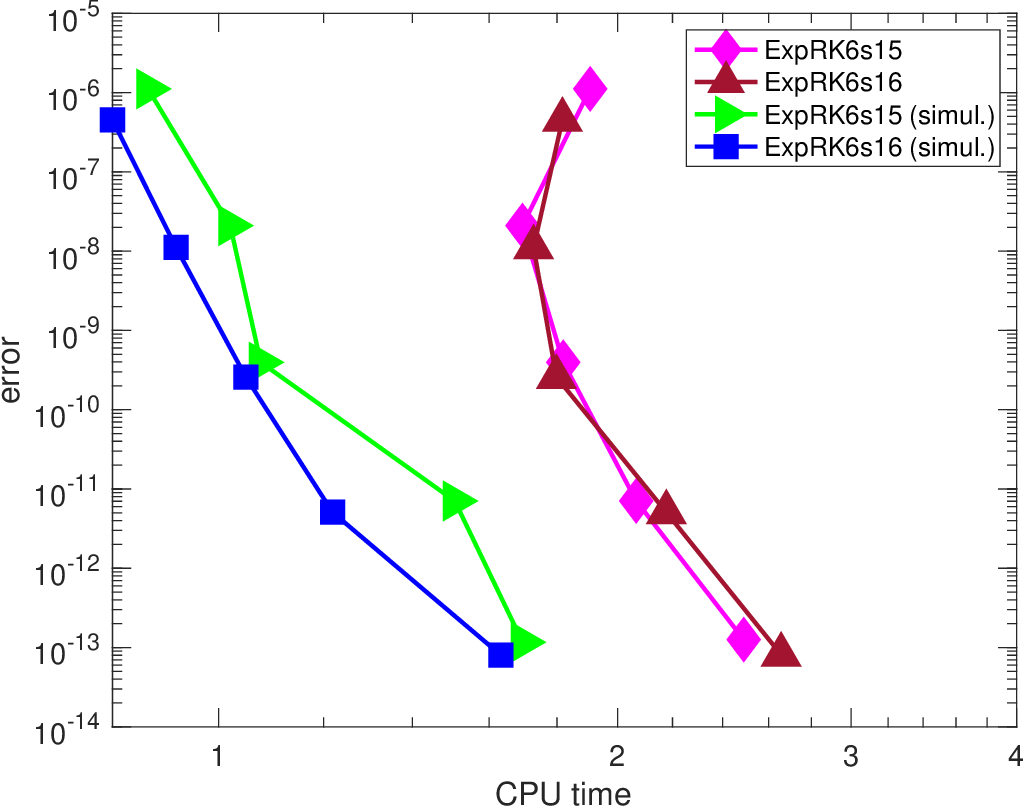,width=0.33\linewidth,clip=}  
\end{tabular}
\caption{\label{fig5.1} Order plots (left) and total CPU times (right) of $\mathtt{ExpRK6s15}$ and $\mathtt{ExpRK6s16}$ when applied to Example~\ref{ex1}. %The errors at time $t=1$ are plotted as functions of step sizes (left) and the total CPU time in second (right). 
%A straight line with slope 6 is also added to verify convergence rates.
}
\end{figure}

When considering the PDE in  Example~\ref{ex1} with inhomogeneous boundary conditions, our experiments show that, depending on each specific case, $\mathtt{ExpRK6s15}$ and  $\mathtt{ExpRK6s16}$ might or might not suffer from an order reduction. For instance, with the time-dependent boundary values $u(0) = t, \ u(1) = t + 1$   (e.g., corresponding to $u(x,t)=x(1-x)e^t + x + t$), we do not observe any order reduction. However, with  $u(0) = \cos(t), \ u(1) = \cos(t + 1)$  (e.g., corresponding to $u(x,t)=x(1-x)e^t + \cos(x + t)$, where the solution involves oscillating components), we observe significant order reduction in both 6th-order integrators. Interestingly, this phenomenon is not observed with lower-order ($\le 3$) ExpRK schemes which strictly satisfy all the stiff order conditions.
Since studying order reduction for ExpRK methods is not the primary focus of this work, we do not elaborate further details here, but instead refer to some recent results \cite{Cano2017,Cano2018,Cano2022a,Cano2022b}.
%%% bibliography (using bibtex file references.bib to input all references to the paper-----------------------------------------------------------
\bibliographystyle{elsarticle-num}
{\small
\bibliography{references}
}
\end{document}